\def\qed{\ifhmode\textqed\fi
	\ifmmode\ifinner\hfill\quad\qedsymbol\else\dispqed\fi\fi}
\def\textqed{\unskip\nobreak\penalty50
	\hskip2em\hbox{}\nobreak\hfill\qedsymbol
	\parfillskip=0pt \finalhyphendemerits=0}
\def\dispqed{\rlap{\qquad\qedsymbol}}
\setlist[enumerate,1]{%
	label={\normalfont\arabic*.},
	left={\parindent},
	itemsep={15pt}%
}%
\setlist[enumerate,2]{%
	label={\normalfont(\roman*)},
	left={\parindent},
	itemsep={15pt}%
}%
\newlist{alphanumerate}{enumerate}{10}
\setlist[alphanumerate]{%
	label={\normalfont\alph*)},%
	left={\parindent},%
	itemsep={12pt}%
}%
\newcommand{\MBB}{\mathbb}
	\def\ZZ{\MBB Z} % integers
\let\epsilon=\varepsilon
\let\phi=\varphi
\let\kappa=\varkappa
\DeclareMathOperator{\Spec}{Spec}
\DeclareMathOperator{\Max}{Max}
\theoremstyle{plain}{%
	\newtheorem{Theorem}{Theorem}[section]
	
	\newtheorem{Proposition}[Theorem]{Proposition}
	\newtheorem{Corollary}[Theorem]{Corollary}
	\newtheorem{Question}{Question}	

}%
\theoremstyle{definition}{%
	\newtheorem{Definition}[Theorem]{Definition}
	\newtheorem{Example}[Theorem]{Example}
	
	\newtheorem{Remark}[Theorem]{Remark}
}%
\begin{document}
\title{A note on the class of sober rings}

\author{Saeid Jafari}
\address{Saeid Jafari, Mathematical and Physical Science Foundation, Sidevej 5, 4200 Slagelse, Denmark}
\email{jafaripersia@gmail.com, saeidjafari@topositus.com}

\author{Ernesto Lax}
\address{Ernesto Lax, Department of mathematics and computer sciences, physics and earth sciences, University of Messina, Viale Ferdinando Stagno d'Alcontres 31, 98166 Messina, Italy}
\email{erlax@unime.it}

\subjclass{Primary 13A15, Secondary 13F05, 13F10}
\keywords{sober rings, prime ideals, Jacobson rings, Dedekind domains}
\thanks{}

\begin{abstract}
    We introduce the class of sober rings and investigate it through several key results, highlighting connections to some other known classes of rings. We analyze sufficient conditions for a ring to be sober, as well as necessary conditions. We also provide examples to illustrate the behavior of this property.
\end{abstract}

\maketitle

\section{Introduction}
The foundations of ring theory date back to the late 19th century, arising from the work of Richard Dedekind who first introduced the concept of ring of integers of a number field. In 1897, David Hilbert further advanced this line by introducing the term ``Zahlring'', literally {\em number ring}, which appears for the first time in his seminal work \cite{Hilb}. This terminology reflected an effort to generalize arithmetic in algebraic number fields and a shift toward a more systematic classification of algebraic structures. However, the true formalization of ring theory emerged in the early 20th century, from the work of Fraenkel \cite{Fraen} and Emmy Noether \cite{Noether}, who played decisive roles in axiomatizing the concept of {\em ring}. Over nearly two centuries, the notion of ring has evolved into one of the most central structures in algebra. Its development has led to a vast theory, connecting fields as diverse as number theory, algebraic geometry, and functional analysis. Within this framework, the concept of {\em ideal} remains crucial, allowing to define key properties and distinguish between various classes of rings.

In this note, we introduce the novel class of sober rings. The notion of soberness originates from topology, where a topological space is called {\em sober} if every non-empty irreducible closed subset is the closure of a unique point. This property has become a central idea in many fields apart from topology, such as algebraic geometry and category theory. Moreover, such notion has an algebraic relevance, in fact the spectrum of a commutative ring with the Zariski topology is an example of sober space. A ring $R$ is called a {\em sober ring} if every prime non-maximal ideal of $R$ is not equal to the intersection of the prime ideals properly containing it.

This work is structured as follows. In Section \ref{sec:BasicProps_SoberRings}
we introduce the notion of sober ring and illustrate some preliminary results (Proposition \ref{Prop:0-dim_sober} and Corollary \ref{Cor:Artin-comm_sober}). Subsequently, sufficient conditions for a ring to be sober are also established, giving connections with other classes of rings, including semilocal rings (Proposition \ref{Prop:SEMILOCAL-sober}). With these investigations, it is possible to find rings that are not sober, such as Jacobson rings (Proposition \ref{Prop:JAC-not_sober}). Moreover, the behavior of soberness under ring constructions, such as polynomial rings, is described. The obtained results allow us to formulate some questions, which motivate a further and in-depth study of this class of rings.

\section{Base properties of sober rings}\label{sec:BasicProps_SoberRings}
In what follows, we will assume that $R$ is a commutative ring with identity. A {\em prime ideal} of $R$ is a proper ideal $P\subset R$ such that whenever $ab\in P$, one has $a\in P$ or $b\in P$. The set of all prime ideals of $R$ is called the {\em spectrum} of $R$ and denoted by $\Spec(R)$. A {\em maximal ideal} of $R$ is a proper ideal $M\subset R$ such that there not exists a proper ideal $J\subset R$ such that $M\subset J\subset R$. The set of all maximal ideal of $R$ will be denoted by $\Max(R)$. The intersection of all maximal ideal of $R$ is called the {\em Jacobson radical} of $R$ and denoted by $J(R)$. It is clear that every maximal ideal is also prime, hence $\Max(R)\subseteq \Spec(R)$, but in general these two sets are not equal.

\begin{Definition}
	A ring $R$ is called a {\em sober ring} if for every ideal $P\in\Spec(R)$, with $P\notin\Max(R)$, one has
	\[
	P \neq \bigcap_{\substack{Q \in \Spec(R) \\ P \subsetneq Q}} Q.
	\]
\end{Definition}\bigskip
%We now proceed to explore the fundamental properties of the new class of rings defined above.

First recall that the {\em Krull dimension} of a ring $R$, denoted by $\dim(R)$, is the supremum of the lengths of the finite ascending chains
\[
P_0\subset P_1 \subset \ldots \subset P_{k-1} \subset P_k,
\]
of prime ideals of $R$. A ring is called {\em zero-dimensional} if it has Krull dimension zero. In a zero-dimensional ring every prime ideal is also maximal, hence the next result follows.

\begin{Proposition}\label{Prop:0-dim_sober}
	Every zero-dimensional ring is a sober ring.
\end{Proposition}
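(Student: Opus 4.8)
The plan is to show that the defining inequality of a sober ring is satisfied \emph{vacuously} when $\dim(R)=0$. The condition in the definition only constrains prime ideals $P$ with $P\notin\Max(R)$; hence it suffices to check that a zero-dimensional ring admits no such primes, that is, that $\Spec(R)=\Max(R)$.

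To establish this, I would argue by contradiction: suppose $P\in\Spec(R)$ with $P\notin\Max(R)$. Since $P$ is a proper ideal, it is contained in some maximal ideal $M$, and since $P$ is not itself maximal this containment must be strict, so $P\subsetneq M$. This yields an ascending chain $P\subsetneq M$ of prime ideals of length $1$, contradicting $\dim(R)=0$. Therefore every prime ideal of $R$ is maximal.

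With $\Spec(R)=\Max(R)$ in hand, the set of prime non-maximal ideals of $R$ is empty, so the inequality $P\neq\bigcap_{Q\supsetneq P}Q$ required in the definition has to be verified for no $P$ at all, and $R$ is sober. There is no genuine obstacle here: the only input beyond unwinding definitions is the standard fact that every proper ideal lies in a maximal ideal, and the argument is otherwise immediate from the definition of Krull dimension.
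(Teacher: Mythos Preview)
Your proposal is correct and follows exactly the reasoning the paper indicates: the paper simply observes that in a zero-dimensional ring every prime ideal is maximal, so the sober condition is vacuous, and states the proposition without a formal proof. Your version merely spells out the (standard) reason why $\Spec(R)=\Max(R)$ when $\dim(R)=0$; there is no substantive difference in approach.
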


Recall that an {\em Artinian ring} $R$ is a ring satisfying the {\em descending chain condition} on ideals, that is every decreasing sequence of ideals of $R$,
\[
I_1 \supseteq I_2 \supseteq I_3 \supseteq \cdots 
\]
eventually stabilizes, {\em i.e.} there exist an integer $m>0$ such that $I_j=I_m$ for all $j\geq m$. One can prove that any Artinian ring is also a zero-dimensional ring. As a consequence of Proposition \ref{Prop:0-dim_sober} one has the following corollary.

\begin{Corollary}\label{Cor:Artin-comm_sober}
	Let $R$ be an Artinian ring. Then $R$ is a sober ring.
\end{Corollary}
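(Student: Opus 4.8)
The plan is to obtain this immediately from Proposition \ref{Prop:0-dim_sober} together with the classical structure theory of Artinian rings. The only substantive input is that a commutative Artinian ring is zero-dimensional, exactly the fact recalled just before the corollary; once this is in hand, soberness follows with no further work.

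First I would recall why a commutative Artinian ring $R$ has Krull dimension zero. Suppose toward a contradiction that $P \subsetneq Q$ were a strict chain of prime ideals. The quotient $R/P$ is again Artinian and is a domain that is not a field, so it contains a nonzero non-unit $x$. The descending chain $(x) \supseteq (x^2) \supseteq (x^3) \supseteq \cdots$ stabilizes, so $(x^n) = (x^{n+1})$ for some $n$, which gives $x^n = x^{n+1}y$ for some $y \in R/P$, hence $x^n(1 - xy) = 0$; since $R/P$ is a domain and $x^n \neq 0$, we get $xy = 1$, i.e.\ $x$ is a unit, a contradiction. Therefore every prime ideal of $R$ is maximal and $\dim(R) = 0$. (Alternatively, one may simply cite this standard fact, e.g.\ Atiyah--Macdonald.)

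Having established that $R$ is zero-dimensional, I would invoke Proposition \ref{Prop:0-dim_sober} to conclude that $R$ is sober. Concretely, for every $P \in \Spec(R)$ zero-dimensionality forces $P \in \Max(R)$, so the class of primes $P \notin \Max(R)$ appearing in the definition of sober ring is empty, and the required inequality holds vacuously over that empty class.

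I do not expect any genuine obstacle: the statement is a formal consequence of the preceding proposition, and the corollary merely packages it with the well-known implication ``Artinian $\Rightarrow$ zero-dimensional''. The only place warranting a line of care is that last implication, which is why I would either include the short argument above or reference it.
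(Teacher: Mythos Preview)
Your proposal is correct and matches the paper's approach exactly: the corollary is derived as an immediate consequence of Proposition \ref{Prop:0-dim_sober} via the standard fact that Artinian rings are zero-dimensional, which the paper states just before the corollary. Your inclusion of a short argument for ``Artinian $\Rightarrow$ zero-dimensional'' is a welcome addition but not something the paper itself supplies.
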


An Artinian ring has only finitely many maximal ideals. More in general, a ring with such a property is called a {\em semilocal ring}. It is natural to ask, inspired by Corollary \ref{Cor:Artin-comm_sober}, whenever the semilocality of the ring guarantees the soberness. The next result gives a partial answer to this problem. In the following, unless specified, every ring will have at least Krull dimension $1$. 

\begin{Proposition}\label{Prop:SEMILOCAL-sober}
	Let $R$ be a semilocal ring with $\dim(R)=1$. Then $R$ is a sober ring.
\end{Proposition}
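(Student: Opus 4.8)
The plan is to show directly that every non-maximal prime $P$ of a one-dimensional semilocal ring $R$ fails to equal the intersection of the primes strictly containing it. Since $\dim(R)=1$, any prime $P$ that is \emph{not} maximal must be minimal: there is no room for a chain $P\subsetneq Q\subsetneq M$, so every prime properly containing $P$ is in fact a maximal ideal. Hence the intersection in the definition of soberness, taken over all $Q\in\Spec(R)$ with $P\subsetneq Q$, runs precisely over the (maximal) primes lying over $P$. The goal is then to produce an element of that intersection which is not in $P$.

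First I would reduce to the case where $R$ is a domain by passing to $R/\mathfrak p$ for a minimal prime $\mathfrak p \subseteq P$; more cleanly, since $P$ is itself minimal, one can localize or quotient so that $P$ becomes the zero ideal of the reduced ring $R/P$, which is a one-dimensional semilocal domain, and soberness at $P$ in $R$ translates to the statement that in $R/P$ the zero ideal is not the intersection of all its (finitely many) maximal ideals — i.e.\ that the Jacobson radical $J(R/P)$ is nonzero. So the heart of the matter is: \textbf{a one-dimensional semilocal domain has nonzero Jacobson radical.} This is where the finiteness of $\Max$ is essential: if $\mathfrak m_1,\dots,\mathfrak m_n$ are the maximal ideals of a domain $D$ with $\dim D = 1$, pick a nonzero $x\in\mathfrak m_1$; its set of associated/minimal primes among the $\mathfrak m_i$ is finite, and using prime avoidance together with the fact that each nonzero element lies in only finitely many of the $\mathfrak m_i$ (because $D/xD$ is zero-dimensional, hence has finitely many primes), one can multiply suitable elements together to land in every $\mathfrak m_i$ at once while staying nonzero since $D$ is a domain. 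Concretely: for each $i$ choose $0\neq a_i \in \mathfrak m_i$ (possible as $\mathfrak m_i \neq 0$, since $\dim D \geq 1$), and set $a = a_1 a_2 \cdots a_n$; then $a \neq 0$ as $D$ is a domain, and $a \in \bigcap_i \mathfrak m_i = J(D)$.

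Translating back, the product $a$ (lifted to $R$) gives an element of $\bigcap_{P\subsetneq Q} Q$ that is not in $P$, which is exactly what soberness at $P$ demands; and when $P$ is already maximal there is nothing to check. The main obstacle I anticipate is purely bookkeeping: making sure that ``every prime strictly above $P$ is maximal'' is argued carefully from $\dim(R)=1$ (including the edge case where $P$ is contained in no maximal ideal other than… — but of course every proper ideal is contained in some maximal ideal, and if $P$ is contained in a \emph{unique} maximal ideal the argument still goes through), and that the quotient $R/P$ really is semilocal of dimension one so that the finiteness of its maximal spectrum is available. No hard theorem beyond the correspondence of primes under quotients and the elementary fact that a nonzero element of a domain times another nonzero element is nonzero is needed.
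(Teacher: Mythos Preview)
Your argument is correct and shares the paper's core device: pick a nonzero element from each of the finitely many maximal ideals and multiply them together to obtain a nonzero element of the Jacobson radical. The paper does this directly in $R$, asserting that since $\dim(R)=1$ the only non-maximal prime is $(0)$ and then forming the product $x=x_1\cdots x_n$ of nonzero $x_i\in M_i$. Your version first passes to the quotient $R/P$ so as to work in an honest domain before forming the product. That reduction is not merely cosmetic: without it, the claim that $x_1\cdots x_n\neq 0$ tacitly assumes $R$ has no zero-divisors, and the statement ``every nonzero prime is maximal'' already presupposes that the minimal primes of $R$ are zero. Your quotient step patches both points at once, so what you have is the same argument made rigorous for a general (not necessarily integral) semilocal ring of dimension one. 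The digression about prime avoidance and $D/xD$ being zero-dimensional is unnecessary; the ``Concretely'' line that follows it is all you need.
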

\begin{proof}
	Since $\dim(R)=1$, every non-zero prime ideal is maximal. Hence, it suffices to show that the zero ideal is different to the intersection of the maximal ideals of $R$. Let $M_1,\ldots,M_n$ be the maximal ideals of $R$ and let\linebreak $I=\displaystyle{\bigcap_{i=1}^n} M_i$. We want to show that $I\neq (0)$. Observe that for $i=1,\ldots,n$, since $M_i\neq (0)$, we can choose $x_i \in M_i$ such that $x_i \neq 0$. Therefore, set $x=x_1,\ldots,x_n$, one has $x\in I$, but $x\neq 0$. Hence, $I\neq (0)$ and $R$ is sober.
\end{proof}

\begin{Remark}
	Observe that the semilocality of the ring in Proposition \ref{Prop:SEMILOCAL-sober} is a fundamental assumption. In fact, a $1$-dimensional ring with an infinite number of maximal ideals may not be sober in general. 
\end{Remark}

\begin{Example}\label{Ex:Z-not_sober}
	The ring $\ZZ$ of integers is not a sober ring. In fact, in $\ZZ$ every non-zero prime ideal is maximal and has the form $(p)$, with $p$ a prime number. Hence, the only prime non-maximal ideal is $(0)$. Clearly
	\[
	(0) = \bigcap_{\substack{p\in\ZZ\\ \text{$p$ prime}}} (p),
	\]
	hence $\ZZ$ is not sober.
\end{Example}

Recall that a Dedekind domain is an integral domain which is not a field, such that every non-zero proper ideal factors into the product of prime ideals. For a Dedekind domain $R$ one has $\dim R\leq 1$.

The next result states a more general property then the one illustrated in Example \ref{Ex:Z-not_sober}. 

\begin{Proposition}\label{Prop:Dedekind-not_sober}
	Let $R$ be a Dedekind domain that is not a field and suppose $J(R)=(0)$. Then $R$ is not a sober ring.
\end{Proposition}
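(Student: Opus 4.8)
The plan is to observe that the hypotheses force $(0)$ to be the unique prime non-maximal ideal of $R$, and that the primes strictly above it are exactly the maximal ideals, so the ``sober'' test at $P=(0)$ reduces directly to the assumption $J(R)=(0)$. First I would record that since $R$ is an integral domain, the zero ideal $(0)$ is prime, i.e. $(0)\in\Spec(R)$; and since $R$ is not a field, $(0)$ is not maximal, so $(0)\in\Spec(R)\setminus\Max(R)$. Thus $(0)$ is a legitimate test ideal for the sober condition.

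Next I would use that a Dedekind domain has $\dim R\le 1$, together with the fact that it is not a field, to conclude $\dim R=1$ exactly (a domain of dimension $0$ is a field). Consequently every non-zero prime ideal of $R$ is maximal, and since $(0)$ is prime, this gives the identification
\[
\{\,Q\in\Spec(R) : (0)\subsetneq Q\,\}=\Max(R),
\]
which is moreover non-empty because $R$ has at least one non-zero prime. Then I would simply compute
\[
\bigcap_{\substack{Q\in\Spec(R)\\(0)\subsetneq Q}} Q \;=\; \bigcap_{M\in\Max(R)} M \;=\; J(R)\;=\;(0),
\]
using the hypothesis $J(R)=(0)$ in the last equality. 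Hence the prime non-maximal ideal $(0)$ does equal the intersection of the primes properly containing it, so the defining condition of a sober ring fails and $R$ is not sober.

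There is essentially no serious obstacle here: the argument is a direct unwinding of the definition once one notes that in a one-dimensional domain the only prime non-maximal ideal is $(0)$ and the primes above it are precisely the maximal ideals. The only points that deserve an explicit sentence are that $(0)$ is indeed prime and non-maximal (so that it is a valid test ideal), and that $\Max(R)\ne\varnothing$ with $\dim R=1$, so that the displayed intersection is genuinely an intersection over the non-zero primes. This also recovers Example~\ref{Ex:Z-not_sober} as the special case $R=\ZZ$, since $\ZZ$ is a Dedekind domain, is not a field, and $J(\ZZ)=(0)$.
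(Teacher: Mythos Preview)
Your argument is correct and follows essentially the same route as the paper: identify $(0)$ as the unique prime non-maximal ideal (using $\dim R=1$ for a Dedekind domain that is not a field), observe that the primes strictly above it are exactly $\Max(R)$, and conclude from $J(R)=(0)$ that the sober condition fails at $P=(0)$. The paper additionally remarks that $J(R)=(0)$ is equivalent to $R$ having infinitely many maximal ideals, but this is contextual rather than logically required, so your omission of it is harmless.
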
\pagebreak
\begin{proof}
	Recall that a Dedekind domain has zero Jacobson radical if and only if there are infinitely many maximal ideals in $R$. Moreover, in such a ring every non-zero prime ideal is maximal. Hence, the only prime non-maximal ideal is $(0)$, which is the intersection of all (maximal) ideals of $R$ (since $J(R)$ vanishes). It follows that $R$ is not sober.
\end{proof}

As a striking consequence of Proposition \ref{Prop:Dedekind-not_sober}, we have that principal ideal domains with zero Jacobson ideal are not sober rings.

\begin{Corollary}\label{Cor:PID-not_sober}
	Let $R$ be a principal ideal domain that is not a field and suppose $J(R)=(0)$. Then $R$ is not a sober ring.
\end{Corollary}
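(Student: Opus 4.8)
The plan is to derive this as an immediate consequence of Proposition \ref{Prop:Dedekind-not_sober}. The key observation is that every principal ideal domain is a Dedekind domain: indeed, a PID is Noetherian, integrally closed in its fraction field, and of Krull dimension at most one (every non-zero prime ideal of a PID is maximal), which are precisely the defining properties of a Dedekind domain; alternatively, in a PID every non-zero proper ideal factors uniquely into a product of prime (principal) ideals, which is the factorization definition of a Dedekind domain used in the excerpt.

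First I would state that $R$, being a principal ideal domain that is not a field, is in particular a Dedekind domain that is not a field. Then, since by hypothesis $J(R) = (0)$, Proposition \ref{Prop:Dedekind-not_sober} applies verbatim and yields that $R$ is not a sober ring. That is the entire argument; no further computation is needed.

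The only point that deserves a word of justification is the implication ``PID $\Rightarrow$ Dedekind domain'', and even this is standard: one should simply recall (or cite) that in a principal ideal domain every non-zero proper ideal is a product of prime ideals, so the definition of Dedekind domain recalled just before Proposition \ref{Prop:Dedekind-not_sober} is satisfied. Since a PID is not assumed to be a field here, all hypotheses of that proposition are met.

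There is essentially no obstacle: the corollary is a pure specialization. If one wanted to make the note self-contained, the mild ``hard part'' would be spelling out why a PID is a Dedekind domain, but this is a textbook fact and a one-line remark suffices. I would therefore keep the proof to two or three sentences, emphasizing only that a PID which is not a field is a Dedekind domain which is not a field, and then invoking Proposition \ref{Prop:Dedekind-not_sober}.
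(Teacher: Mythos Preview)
Your proposal is correct and follows exactly the paper's approach: the paper's proof simply notes that every principal ideal domain that is not a field is a Dedekind domain and then invokes Proposition~\ref{Prop:Dedekind-not_sober}. Your additional remarks about why a PID is a Dedekind domain are fine but more detailed than what the paper actually writes.
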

\begin{proof}
	It is clear that every principal ideal domain that is not a field is a Dedekind domain, then the assertion follows.
\end{proof}

By Proposition \ref{Prop:SEMILOCAL-sober}, we have that $1$-dimensional semilocal rings are sober. Whereas, there are class of rings of dimension $1$, with infinitely many maximal ideals that are not sober (as seen in Proposition \ref{Prop:Dedekind-not_sober} and \ref{Cor:PID-not_sober}). Hence, the following question arises naturally.

\begin{Question}
	Let $R$ be a ring, with $\dim(R) = 1$ and infinitely many maximal ideals. Is it always true that $R$ is not a sober ring?
\end{Question}\bigskip

A commutative ring $R$ is called a {\em Jacobson ring} if every prime ideal of $R$ is the intersection of maximal ideals of $R$.

\begin{Proposition}\label{Prop:JAC-not_sober}
	Let $R$ be a commutative, Jacobson ring. Then $R$ is not a sober ring.
\end{Proposition}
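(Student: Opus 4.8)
The plan is to prove something slightly stronger than asked: that in a Jacobson ring $R$, \emph{every} prime ideal $P$ that is not maximal already satisfies $P=\bigcap_{Q\in\Spec(R),\,P\subsetneq Q}Q$, so that the sober condition fails the moment such a $P$ exists. To obtain such a $P$ I would invoke the standing convention $\dim(R)\ge 1$: taking a chain $P_0\subsetneq P_1$ of prime ideals, the ideal $P:=P_0$ is prime and not maximal.

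Next I would fix such a non-maximal prime $P$ and unwind the Jacobson hypothesis. Since $P$ is an intersection of maximal ideals, and since every maximal ideal occurring in that intersection contains $P$ while the intersection of all maximal ideals containing $P$ is in turn contained in $P$, one gets $P=\bigcap_{M\in\Max(R),\,P\subseteq M}M$; and as $P$ is not maximal, each such $M$ in fact satisfies $P\subsetneq M$. Then I would compare the two relevant intersections: from $\Max(R)\subseteq\Spec(R)$ we get $\bigcap_{Q\in\Spec(R),\,P\subsetneq Q}Q\subseteq\bigcap_{M\in\Max(R),\,P\subsetneq M}M=P$, while $P\subseteq Q$ for every prime $Q$ with $P\subsetneq Q$ gives the opposite inclusion. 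Hence $P=\bigcap_{Q\in\Spec(R),\,P\subsetneq Q}Q$, and therefore $R$ is not sober.

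I do not anticipate a real obstacle, as the argument is little more than an unwinding of the definitions of \emph{Jacobson ring} and \emph{sober ring}. The only place the hypotheses are genuinely used is in producing a non-maximal prime: a $0$-dimensional Jacobson ring (a field, say) \emph{is} sober by Proposition \ref{Prop:0-dim_sober}, so the standing assumption $\dim(R)\ge 1$ is essential and is exactly what excludes that degenerate case.
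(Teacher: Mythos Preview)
Your proposal is correct and follows essentially the same route as the paper: use $\dim(R)\ge 1$ to produce a non-maximal prime $P$, use the Jacobson hypothesis to write $P$ as the intersection of the maximal ideals (properly) containing it, and then sandwich the intersection over all primes strictly above $P$ between $P$ and that maximal-ideal intersection. Your extra care in explaining why the Jacobson condition yields exactly $P=\bigcap_{M\supsetneq P,\,M\in\Max(R)}M$, and your remark on the necessity of the standing assumption $\dim(R)\ge 1$, are both fine additions.
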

\begin{proof}
	Observe that, since $\dim(R)\geq 1$, one has $\Spec(R)\setminus\Max(R)\neq\emptyset$.\linebreak
	Let $P\in\Spec(R)\setminus\Max(R)$. Since $R$ is a Jacobson ring, $P$ is the intersection of the maximal ideals properly containing it. Hence, let $\mathcal{M} = \{M\in\Max(R)\ :\ M\supset P\}$, we have
	\[
	P=\bigcap \mathcal{M}.
	\]
	Let now $\mathcal{P}=\{Q\in\Spec(R)\ :\ Q\supset P\}$, the family of prime ideals properly containing $P$. It is clear that $P\subseteq \bigcap\mathcal{P}$. Moreover, since $\mathcal{M}\subset\mathcal{P}$, one has $\displaystyle{\bigcap\mathcal{P} \subseteq \bigcap\mathcal{M}}$.\par
	\noindent
	Hence,
	\[
	P \subseteq {\bigcap\mathcal{P}} \subseteq {\bigcap\mathcal{M}} = P.
	\]
	Therefore, $P=\displaystyle\bigcap\mathcal{P}$ and $R$ is not sober.
\end{proof}\bigskip

Let us now focus on polynomial rings. One may ask.

\begin{Question}
Let $R$ be a sober ring. Is $R[x]$ also sober?
\end{Question}

Unfortunately, the above question is not true in general, as illustrated in the next Example.

\begin{Example}
Let $K$ be a field and let $R=K[x]$.\par
$K$ is sober, since $\dim(K) = 0$. But $R$ is not sober, since $R$ is a principal ideal domain with $J(R)=0$, by Corollary \ref{Cor:PID-not_sober} it follows that $R$ is not sober.
\end{Example}
% ------------------------------------------------------------------------

\subsection*{Acknowledgment}
The authors thank Prof. Marilena Crupi for her helpful suggestions. E.~Lax acknowledge support of the GNSAGA group of INdAM (Italy).

\end{document}